\documentclass[reqno,oneside]{amsart}

\usepackage{amssymb,amsmath,amssymb,amsfonts,amsthm}
\usepackage{algorithm}
\usepackage{algorithmic}
\usepackage{bm}
\usepackage{cases}
\usepackage{cite}
\usepackage{dsfont}
\usepackage{float}
\usepackage{geometry}
\usepackage{hyperref}
\usepackage{mathtools}
\usepackage[subrefformat=parens]{subcaption}
\usepackage{tikz}
\usepackage{url}

\theoremstyle{plain}
\newtheorem{lemma}{Lemma}
\newtheorem{theorem}{Theorem}

\theoremstyle{definition}
\newtheorem{example}{Test}
\newtheorem{problem}{Problem}
\newtheorem{remark}{Remark}
\numberwithin{equation}{section}

\allowdisplaybreaks[4]

\def\BR{\mathbb R}
\def\cA{\mathcal A}
\def\cJ{\mathcal J}
\def\rd{\mathrm d}
\def\Ga{\Gamma}
\def\Om{\Omega}
\def\al{\alpha}

\def\de{\delta}
\def\ep{\epsilon}

\def\la{\lambda}

\def\om{\omega}
\def\f{\frac}
\def\nb{\nabla}
\def\pa{\partial}
\def\wt{\widetilde}

\title[Source term reconstructions in a mobile-immobile diffusion model]{Numerical reconstructions of a source term in\\
a mobile-immobile diffusion model from\\
the partial interior observation}

\author[Z. Yang]{Zhiwei Yang$^1$}
\thanks{$^1$ School of Qilu Transportation and State Key Laboratory of Intelligent Manufacturing of Advanced Construction Machinery, Shandong University, Jinan 250002, Shandong, China, e-mail: zhiweiyang@sdu.edu.cn}

\author[Y. Liu]{Yikan Liu$^2$}
\thanks{$^2$ Corresponding author. Department of Mathematics, Kyoto University, Kitashirakawa-Oiwakecho, Sakyo-ku, Kyoto 606-8502, Japan, e-mail: liu.yikan.8z@kyoto-u.ac.jp}

\keywords{Inverse source problem, fractional diffusion model, optimal control approach, finite element conjugate gradient algorithm.}

\begin{document}

\begin{abstract}
We consider an inverse source problem in the two-time-scale mobile-immobile fractional diffusion model from partial interior observation. Theoretically, we combine the fractional Duhamel's principle with the weak vanishing property to establish the uniqueness of this inverse problem. Numerically, we adopt an optimal control approach for determining the source term. A coupled forward-backward system of equations is derived using the first-order optimality condition. Finally, we construct a finite element conjugate gradient algorithm for the numerical inversion of the source term. Several experiments are presented to show the utility of the method.
\end{abstract}

\maketitle	


\section{Introduction}

Fractional differential equations (FDEs) have emerged as a vital tool for modeling systems with memory and hereditary properties, phenomena that classical integer-order differential equations often fail to capture adequately \cite{SunCh}. The nonlocal nature of fractional derivatives enables FDEs to describe long-range dependencies and anomalous diffusion, which are prevalent in diverse fields such as physics \cite{Bag,Ross}, biology \cite{PerKar}, finance \cite{MeeSik}, and engineering \cite{BagTor,BonKap,JaiMcKin,Macha,Mai,Pao,SunZha}. Traditional integer-order derivatives, being local operators, are limited in their ability to model such behaviors, as they cannot account for the historical influence of a system's state on its current dynamics \cite{Die}. This limitation has driven the development of fractional calculus, which provides a more versatile framework for understanding and predicting the behavior of complex systems \cite{SamKil}.

In many natural and engineered systems, dynamics evolve on multiple time scales. For instance, chemical reactions often exhibit fast and slow kinetics \cite{HinAnd}, while biological systems may involve rapid cellular responses followed by slower regulatory processes \cite{PerKar}. Classical mobile-immobile models, which separate dynamics into fast (mobile) and slow (immobile) components, have been widely used to analyze such systems \cite{SchBen,VanWie}. However, when fractional-order dynamics are introduced, the analysis becomes significantly more complex. The interplay between the nonlocal fractional derivatives and the separation of time scales poses unique challenges, necessitating novel mathematical frameworks and computational techniques \cite{Die,SamKil}. The mobile-immobile time-fractional differential equations (MIT-FDEs) thus emerge as a natural extension, enabling the modeling of systems where memory effects and multi-scale behavior coexist, such as viscoelastic materials with fast and slow relaxation processes \cite{MaiSpa,SchMet} or ecological systems with rapid population changes and long-term environmental influences \cite{KhaRaz}.

Inverse problems, particularly inverse source problems, are of paramount importance in both theoretical and applied mathematics. These problems involve the identification of unknown source terms or parameters from observed data, which is critical in many practical applications. For example, in environmental science, determining the source of pollutants from concentration measurements is essential for effective remediation strategies \cite{LugKnu}. Similarly, in medical imaging, reconstructing internal heat sources from surface temperature data is crucial for diagnosing diseases \cite{YeShi}. In the context of FDEs, inverse source problems are especially challenging due to the nonlocal nature of fractional derivatives, which often leads to ill-posedness and requires sophisticated regularization techniques for stable solutions \cite{AliAzi}. While significant progress has been made in studying inverse problems for classical FDEs \cite{JinRun}, the exploration of such problems in the context of MIT-FDEs remains largely uncharted territory.

The inverse source problem for MIT-FDEs represents a natural yet underexplored extension of this research, combining the complexities of mobile-immobile dynamics and fractional-order operators. Such problems arise in scenarios where the source term operates on multiple time scales, such as in multi-rate chemical reactions or biological systems with fast and slow regulatory mechanisms \cite{MagAbd}. Solving these problems requires not only addressing the ill-posedness inherent to inverse problems \cite{CheYua} but also accounting for the interplay between the mobile and immobile components of the system. Recent advancements in inverse source problems for fractional diffusion equations, such as the works of \cite{JLLY17} and \cite{L17}, provide valuable insights into the theoretical and numerical challenges of these problems. 

Let $T>0,q>0,\al\in(0,1)$ be constants and $\Om\subset\BR^d$ ($d=1,2,\dots$) be a bounded domain with a smooth boundary $\pa\Om$. In this paper, we are concerned with the following initial-boundary value problem for a two-scale time-fractional diffusion equation
\begin{equation}\label{main:e0}
\begin{cases}
(\pa_t u+q\,\pa_t^\al u+\cA u)(\bm x,t)=\rho(t)g(\bm x), & (\bm x,t)\in\Om\times(0,T),\\
u(\bm x,0)=a(\bm x), & \bm x\in\Om,\\
u(\bm x,t)=0, & (\bm x,t)\in\pa\Om\times(0,T).
\end{cases}
\end{equation}
Here $\pa_t^\al$ denotes the $\al$-th order Caputo derivative defined by
\begin{equation}\label{Caputo}
\pa_t^\al:=J_{0+}^{1-\al}\circ\f\rd{\rd t}:H^1(0,T)\longrightarrow L^2(0,T),
\end{equation}
where $\circ$ is the composite and $J_{0+}^{1-\al}$ stands for the $(1-\al)$-th order forward Riemann-Liouville integral operator
\[
J_{0+}^{1-\al}f(t):=\int_0^t\f{(t-s)^{-\al}}{\Ga(1-\al)}f(s)\,\rd s,\quad f\in L^2(0,T).
\]
The source term in the governing equation takes the form of separated variables, where $\rho(t)$ and $g(\bm x)$ contribute the temporal and spatial components, respectively. For simplicity, we impose the homogeneous Dirichlet boundary condition, which can be replaced by other kinds and inhomogeneous ones. Meanwhile, $\cA$ is a self-adjoint second order elliptic operator, which will be specified in detail in Section \ref{sec-theory} along with the assumptions on $\rho,g$ and the initial value $a(\bm x)$.

The governing equation in \eqref{main:e0} involves a linear combination of a first and an $\al$-th order derivatives in time. Such formulations are called mobile-immobile diffusion equations, which have been used to improve the modeling of subdiffusive transport of solutes in heterogeneous porous media \cite{SchBen}. Recently, a similar formulation is reported in Machida \cite{M25} as a diffusion approximation to the radiative transport equation with waiting time. We remark that, thanks to the integer leading order of time derivatives, it is not necessary to invoke the modern definition of $\pa_t^\al$ in fractional Sobolev spaces because the initial condition makes a pointwise sense.

This article is concerned with the following inverse problem for \eqref{main:e0}.

\begin{problem}[Inverse source problem]\label{isp}
Let $u$ be the solution to \eqref{main:e0} and $\om$ is a nonempty subdomain of $\Om$. Given the temporal component $\rho(t)$ of the source term, determine the spatial component $g(\bm x)$ the partial interior observation of $u$ in $\om\times(0,T)$.
\end{problem}

In order to invert the spatial load of the two-time-scale fractional diffusion equation, we transform the inverse problem into a minimization problem with a cost function in a least-squares form. With the help of optimal control methods, a coupled forward-backward system of differential equations is derived by utilizing the first optimality condition. Afterwards, we develop a finite element method based on the conjugate gradient algorithm to numerically invert the source term.

The remainder of the paper is organized as follows. In Section \ref{sec-theory}, we recall theoretical backgrounds and demonstrate the uniqueness of Problem \ref{isp}. After preparing a fully discrete finite element method in Section \ref{sect:num} for the direct problem, in Section \ref{sect:inverse} we adopt an optimal control approach to solve the inverse problem. Then we present several numerical experiments in Section \ref{sect:ex} to substantiate the numerical analysis results and the utility of the inverse algorithm. Finally, we present some discussion and concluding remarks in Section \ref{sect:end}.


\section{Preliminary and the theoretical uniqueness}\label{sec-theory}

For $f_1,f_2\in L^1(0,T)$, by $f_1*f_2$ we denote their convolution, that is,
\[
(f_1*f_2)(t):=\int_0^t f_1(t-s)f_2(s)\,\rd s,\quad0<t<T.
\]
Then the $(1-\al)$-th order Riemann-Liouville integral $J_{0+}^{1-\al}f$ of $f\in L^2(0,T)$ can be expressed as
\begin{equation}\label{eq-RL}
J_{0+}^{1-\al}f=k_{1-\al}*f,\quad k_{1-\al}(t):=\f{t^{-\al}}{\Ga(1-\al)}.
\end{equation}
Throughout this article, denote the inner product of $L^2(\Om)$ by $(\,\cdot\,,\,\cdot\,)$ and let $H_0^1(\Om)$, $H^2(\Om)$, $W^{1,1}(0,T)$ etc.\! be the usual Sobolev spaces. The elliptic operator $\cA:H^2(\Om)\cap H_0^1(\Om)\longrightarrow L^2(\Om)$ in \eqref{main:e0} is defined as
\[
\cA f=-\mathrm{div}(\bm A(\bm x)\nb f)+c(\bm x)f,\quad f\in H^2(\Om)\cap H_0^1(\Om),
\]
where $\bm A=(a_{ij})_{1\le i,j\le d}\in C^1(\overline\Om;\BR^{d\times d}_{\mathrm{sym}})$ is a strictly positive-definite matrix-valued function on $\overline\Om$, and $c\in L^\infty(\Om)$ is nonnegative in $\Om$. Then it is known that $\cA$ allows an eigensystem which usually provides great convenience in the solution representation. Nevertheless, in this article the eigensystem of $\cA$ does not appear explicitly.

Regarding the initial value and the source term in \eqref{main:e0}, we assume
\begin{equation}\label{eq-assume}
a\in H_0^1(\Om),\quad\rho\in L^2(0,T),\quad g\in L^2(\Om).
\end{equation}
Concerning the forward problem of \eqref{main:e0}, we state the following result.

\begin{lemma}\label{lem-forward}
The following statements hold:
\begin{enumerate}
\item Under assumption \eqref{eq-assume}, there exists a unique solution
\[
u\in L^2(0,T;H^2(\Om)\cap H_0^1(\Om))\cap H^1(0;T;L^2(\Om))
\]
to the initial-boundary value problem \eqref{main:e0}.
\item Let $g=0$ in $\Om$ and $a\in L^2(\Om)$. Then there exists a unique solution
\[
u\in L^1(0,T;H^2(\Om)\cap H_0^1(\Om))\cap W^{1,1}(0;T;L^2(\Om))
\]
to \eqref{main:e0}. Moreover, the solution $u:(0,T)\longrightarrow H^2(\Om)\cap H_0^1(\Om)$ can be analytically extended to $(0,\infty)$.
\end{enumerate}
\end{lemma}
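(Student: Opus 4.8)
The plan is to reduce \eqref{main:e0} to a family of scalar problems via the spectral decomposition of $\cA$ and to analyze each of them through the Laplace transform. Since $\cA:H^2(\Om)\cap H_0^1(\Om)\to L^2(\Om)$ is self-adjoint, positive and has compact resolvent, it admits an orthonormal basis $\{\vp_n\}$ of $L^2(\Om)$ consisting of eigenfunctions, with eigenvalues $0<\la_1\le\la_2\le\cdots\to\infty$. Writing $u_n(t):=(u(\,\cdot\,,t),\vp_n)$, $a_n:=(a,\vp_n)$ and $g_n:=(g,\vp_n)$ and projecting \eqref{main:e0} onto $\vp_n$, I obtain the decoupled scalar problem
\[
u_n'(t)+q\,\pa_t^\al u_n(t)+\la_n u_n(t)=\rho(t)g_n,\quad u_n(0)=a_n.
\]
Applying the Laplace transform and the transformation rules for the first-order and Caputo derivatives yields, with $P_n(z):=z+qz^\al+\la_n$,
\[
\widehat{u_n}(z)=\f{(1+qz^{\al-1})\,a_n+\widehat\rho(z)\,g_n}{P_n(z)}.
\]
Accordingly I set $S_n$ and $K_n$ to be the inverse Laplace transforms of $(1+qz^{\al-1})/P_n(z)$ and $1/P_n(z)$ respectively, so that $u_n(t)=a_n S_n(t)+g_n(\rho*K_n)(t)$ and $u=\sum_n u_n\vp_n$ formally solves \eqref{main:e0}. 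Each kernel is represented by a Bromwich integral along a vertical line in the right half-plane.

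The analytic core is the behaviour of the symbol $P_n$. A direct computation of its imaginary part, $\mathrm{Im}\,P_n(re^{i\theta})=r\sin\theta+qr^\al\sin(\al\theta)$, shows that for $0<|\theta|<\pi$ both terms carry the sign of $\theta$ (since $0<\al\theta<\pi$), so $\mathrm{Im}\,P_n\ne0$ there; on the two rays $\theta=0,\pi$ one checks directly that $P_n\ne0$ as well. Hence $P_n$ is zero-free on the principal branch domain $\{z\ne0:|\arg z|<\pi\}$, uniformly in $n$. This permits me to deform the Bromwich contour onto a Hankel-type path $\{|\arg z|=\pi-\ep\}$ and to derive, via the resulting contour integrals, the single-mode estimates
\[
\la_n\int_0^T|S_n(t)|^2\,\rd t\le C,\quad \int_0^T|S_n'(t)|^2\,\rd t\le C\la_n,\quad \la_n\|K_n\|_{L^1(0,T)}+\|K_n'\|_{L^1(0,T)}+|K_n(0)|\le C
\]
with $C$ independent of $n$, together with $S_n(0)=1$. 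Producing these bounds uniformly in $\la_n$, while simultaneously tracking the two competing powers $z$ and $z^\al$ across the short-time ($z$-dominated) and long-time ($\la_n$-dominated) regimes and handling the fractional branch cut, is the main technical obstacle.

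Granting these estimates, part (i) follows by assembling the series. For the initial-value contribution $\sum_n a_n S_n\vp_n$, the first two displayed bounds give $\int_0^T\la_n^2|S_n|^2\,\rd t\le C\la_n$ and $\int_0^T|S_n'|^2\,\rd t\le C\la_n$, which are summable against $\sum_n\la_n|a_n|^2<\infty$ granted by $a\in H_0^1(\Om)$; by Parseval's identity and Tonelli's theorem the sum then lies in $L^2(0,T;H^2(\Om)\cap H_0^1(\Om))\cap H^1(0,T;L^2(\Om))$. For the source contribution $\sum_n g_n(\rho*K_n)\vp_n$, Young's convolution inequality combined with the $K_n$-bounds yields $\la_n\|\rho*K_n\|_{L^2(0,T)}\le C\|\rho\|_{L^2(0,T)}$ and, using $(\rho*K_n)'=K_n(0)\rho+\rho*K_n'$, also $\|(\rho*K_n)'\|_{L^2(0,T)}\le C\|\rho\|_{L^2(0,T)}$, which are summable against $\sum_n|g_n|^2<\infty$ granted by $g\in L^2(\Om)$. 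Uniqueness I would obtain by an energy argument: for the homogeneous equation with zero data, pairing with the solution in $L^2(\Om)$, integrating over $(0,t)$ and invoking the coercivity of $\cA$ together with the nonnegativity inequality $\int_0^t(\pa_s^\al v,v)\,\rd s\ge0$ for the Caputo derivative forces $v\equiv0$.

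Finally, part (ii) treats $g=0$, $a\in L^2(\Om)$, so $u=\sum_n a_n S_n\vp_n$. The smoothing estimate $\int_0^T\la_n|S_n(t)|\,\rd t\le C$, a variant of the bounds above obtainable from the same contour representation, combines with Minkowski's integral inequality to give $u\in L^1(0,T;H^2(\Om)\cap H_0^1(\Om))$, and the companion bound $\int_0^T|S_n'|\,\rd t\le C$ yields $u\in W^{1,1}(0,T;L^2(\Om))$. For the analytic continuation I use the zero-free property once more: since $P_n$ has no zeros in $\{|\arg z|<\pi\}$, the Hankel contour $\{|\arg z|=\pi-\ep\}$ makes the representation of $S_n$, and of the series $\sum_n a_n S_n(t)\vp_n$, converge locally uniformly for $t$ in the sector $\{t\ne0:|\arg t|<\ep\}$; Morera's theorem together with this uniform convergence then gives analyticity of $u:(0,\infty)\to H^2(\Om)\cap H_0^1(\Om)$ and its extension into the complex sector, in particular the claimed extension to $(0,\infty)$.
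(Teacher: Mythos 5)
The paper does not prove this lemma in-house: part (i) is delegated to Li--Huang--Yamamoto (their Theorem 2.1), and part (ii) to the explicit eigenfunction-expansion solution with multinomial Mittag-Leffler estimates of Li--Liu--Yamamoto together with the time-analyticity result of Li--Huang--Yamamoto. Your argument is a self-contained rendering of essentially that same strategy: projecting onto the eigenbasis and Laplace-transforming each mode is exactly what those references do, your kernels $S_n$ and $K_n$ are (up to normalization) the multinomial Mittag-Leffler functions appearing there, and the uniform bounds you postulate, namely $\la_n\|S_n\|_{L^2(0,T)}^2\le C$, $\|S_n'\|_{L^2(0,T)}^2\le C\la_n$ and $\la_n\|K_n\|_{L^1(0,T)}+\|K_n'\|_{L^1(0,T)}+|K_n(0)|\le C$, coincide with the known Mittag-Leffler decay estimates and reduce correctly to the case $S_n=K_n=e^{-\la_n t}$ when $q=0$. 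What your contour-integral route buys is independence from special-function machinery; what it costs is that the entire analytic burden is concentrated in those single-mode estimates, which you explicitly identify as the main obstacle but do not derive. To close that gap you need the resolvent-type lower bound $|P_n(z)|\ge c_\ep(|z|+\la_n)$ on the rays $\arg z=\pm(\pi-\ep)$; this does follow from your sign computation with a short case split ($|z|\ge\la_n/2$ gives $|\mathrm{Im}\,P_n(z)|\ge|z|\sin\ep$, while $|z|$ small compared with $\la_n$ gives $\mathrm{Re}\,P_n(z)\gtrsim\la_n$), after which the claimed bounds are standard Laplace-inversion computations. So the missing piece is labour rather than an idea, and the skeleton of the proof is sound.

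Two smaller points deserve attention. First, the sector of $t$-analyticity produced by a Hankel contour with rays at angles $\pm(\pi-\ep)$ is $\{|\arg t|<\pi/2-\ep\}$ (one needs $\cos(\arg z+\arg t)<0$ along the rays), not $\{|\arg t|<\ep\}$ as you wrote; this is harmless since the correct sector is larger and still contains $(0,\infty)$, but the angle bookkeeping should be fixed. Second, in part (i) the summability $\sum_n\la_n|a_n|^2<\infty$ rests on the norm equivalence $\|a\|_{H_0^1(\Om)}^2\sim\|\cA^{1/2}a\|_{L^2(\Om)}^2$, which is valid here because $\bm A$ is uniformly positive definite, $c\ge0$ and the boundary condition is Dirichlet, but it is worth stating explicitly.
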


Lemma \ref{lem-forward} (i) was established in Li, Huang and Yamamoto \cite[Theorem 2.1]{LHY21} in a slightly more general setting. For the well-posedness part of Lemma \ref{lem-forward} (ii), one can employ the explicit solution to \eqref{main:e0} and estimate the involved multinomial Mittag-Leffler functions in the same manner as in Li, Liu, and Yamamoto \cite{LLY15}. Note that although \cite{LLY15} only dealt with the formulation
\begin{equation}\label{eq-multi}
\sum_{j=1}^m q_j\pa_t^{\al_j}u+\cA u=F\quad\mbox{with }1>\al_1>\cdots>\al_m>0,
\end{equation}
the same argument works for our case with the highest order $\al_1=1$. As for the time-analyticity part of Lemma \ref{lem-forward} (ii)), Li, Huang and Yamamoto \cite[Theorem 2.3]{LHY20} established the same result in a similar framework as \eqref{eq-multi}. Again the proof is valid in our case and we omit the details here.

For the uniqueness of Problem \ref{isp}, the following two lemmas serve as essential ingredients. The first one is the following fractional Duhamel's principle connecting inhomogeneous and homogeneous problems.

\begin{lemma}\label{lem-Duhamel}
Let assumption \eqref{eq-assume} be satisfied with $a=0$. Then the unique solution to \eqref{main:e0} allows the representation
\[
u=\mu*v\quad\mbox{in }\Om\times(0,T),
\]
where $v$ solves the homogeneous problem
\begin{equation}\label{eq-IBVP-v}
\begin{cases}
\pa_t v+q\,\pa_t^\al v+\cA v=0 & \mbox{in }\Om\times(0,T),\\
v=g & \mbox{in }\Om\times\{0\},\\
v=0 & \mbox{on }\pa\Om\times(0,T),
\end{cases}
\end{equation}
and $\mu\in L^2(0,T)$ is the unique solution to the integral equation
\begin{equation}\label{eq-mu}
\mu+q\,J_{0+}^{1-\al}\mu=\rho\quad\mbox{in }(0,T).
\end{equation}
\end{lemma}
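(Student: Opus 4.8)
The plan is to solve the scalar integral equation \eqref{eq-mu} for $\mu$ first, then to verify by a direct computation that the candidate $w:=\mu*v$ solves the inhomogeneous problem \eqref{main:e0} with $a=0$, and finally to appeal to the uniqueness asserted in Lemma \ref{lem-forward}(i) to conclude $u=w$. Here $v$ is the solution to the homogeneous problem \eqref{eq-IBVP-v} furnished by Lemma \ref{lem-forward}(ii) with initial value $g\in L^2(\Om)$; in particular $v\in W^{1,1}(0,T;L^2(\Om))\hookrightarrow C([0,T];L^2(\Om))$, so the initial trace $v(\,\cdot\,,0)=g$ makes pointwise sense.

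For the solvability of \eqref{eq-mu}, I would observe that $\mu+q\,J_{0+}^{1-\al}\mu=\mu+q\,k_{1-\al}*\mu=\rho$ is a Volterra equation of the second kind whose kernel $k_{1-\al}$ is weakly singular but lies in $L^1(0,T)$, since $\al\in(0,1)$. The standard resolvent construction then yields a unique $\mu\in L^2(0,T)$ for every $\rho\in L^2(0,T)$; alternatively one may write the resolvent kernel explicitly through a Mittag-Leffler function. This step is routine.

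The core of the argument is the verification. Using the elementary product rule $\pa_t(\mu*v)=v(\,\cdot\,,0)\,\mu+\mu*\pa_t v$, which is valid because $\mu\in L^2(0,T)\subset L^1(0,T)$ and $v\in W^{1,1}(0,T;L^2(\Om))$, I first obtain $\pa_t w=g\,\mu+\mu*\pa_t v$. Applying $J_{0+}^{1-\al}$ (recall the Caputo definition \eqref{Caputo} and \eqref{eq-RL}) and using the commutativity and associativity of convolution, together with $k_{1-\al}*\mu=J_{0+}^{1-\al}\mu$ and $k_{1-\al}*(\mu*\pa_t v)=\mu*\pa_t^\al v$, gives $\pa_t^\al w=g\,J_{0+}^{1-\al}\mu+\mu*\pa_t^\al v$. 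Since $\cA$ acts only in the spatial variable, it commutes with the convolution in $t$, whence $\cA w=\mu*\cA v$. Collecting the three terms,
\[
\pa_t w+q\,\pa_t^\al w+\cA w=g\,(\mu+q\,J_{0+}^{1-\al}\mu)+\mu*(\pa_t v+q\,\pa_t^\al v+\cA v)=\rho\,g,
\]
where the first parenthesis equals $\rho$ by \eqref{eq-mu} and the second vanishes because $v$ solves \eqref{eq-IBVP-v}. The homogeneous boundary condition is inherited from that of $v$, and $w(\,\cdot\,,0)=(\mu*v)(\,\cdot\,,0)=0=a$. Thus $w$ solves \eqref{main:e0}, and uniqueness identifies it with $u$.

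I expect the principal obstacle to be the rigorous justification of these formal convolution manipulations under the limited time regularity available, namely that $\pa_t w$, $\pa_t^\al w$ and $\cA w$ be legitimate in the classes of Lemma \ref{lem-forward} and that the interchange of $\cA$ with the time convolution be licit. The membership of $w$ in the required class $L^2(0,T;H^2(\Om)\cap H_0^1(\Om))\cap H^1(0,T;L^2(\Om))$ should follow from Young's convolution inequality, since $\mu\in L^2(0,T)$ convolved with $\pa_t v,\cA v\in L^1(0,T;L^2(\Om))$ lands in $L^2(0,T;L^2(\Om))$; nevertheless, a clean way to circumvent the remaining technicalities would be to establish the identity first for smooth data $\rho$ and $g$, where the computation above is unambiguous, and then extend to $\rho\in L^2(0,T)$, $g\in L^2(\Om)$ by density, exploiting the continuity of the convolution map and the smoothing provided by the time-analyticity in Lemma \ref{lem-forward}(ii).
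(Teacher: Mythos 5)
Your proposal is correct and follows essentially the same route as the paper: verify by direct convolution computation that $\mu*v$ satisfies \eqref{main:e0} with the required regularity (via Young's inequality) and invoke uniqueness, using the product rule $\pa_t(\mu*v)=\mu\,g+\mu*\pa_t v$ exactly as the paper does. The only minor divergence is in the solvability of \eqref{eq-mu}, where you use the standard Volterra resolvent for the weakly singular $L^1$ kernel while the paper argues via compactness of $J_{0+}^{1-\al}$, the Fredholm alternative, and a generalized Gr\"onwall inequality; both are routine and equally valid.
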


\begin{proof}
Although the above conclusion highly resembles Liu \cite[Lemma 4.2]{L17} and Jiang et al \cite[Lemma 4.1]{JLLY17}, the problem settings are different and we provide an improved proof.

First, we mention the unique existence issue of \eqref{eq-mu}. By \cite[Example 9.8]{FK78}, the operator $J_{0+}^{1-\al}:L^2(0,T)\longrightarrow L^2(0,T)$ is compact. Then according to the Fredholm alternative, it suffices to verify that $\mu=-q\,J_{0+}^{1-\al}\mu$ implies $\mu=0$ in $(0,T)$. Since
\[
|\mu|=q|J_{0+}^{1-\al}\mu|\le q\,J_{0+}^{1-\al}|\mu|,
\]
it follows immediately from a general Gr\"onwall's inequality (see Henry \cite[Lemma 7.1.1]{H81}) that $\mu=0$ in $(0,T)$. This guarantees the unique existence of a solution $\mu\in L^2(0,T)$ to \eqref{eq-mu} for any $\rho\in L^2(0,T)$.

Next, given the solution $v$ to \eqref{eq-IBVP-v} with $g\in L^2(\Om)$, we shall show that the function $\wt u:=\mu*v$ satisfies \eqref{main:e0} with the required regularity. From Lemma \ref{lem-forward} (ii), we see
\[
v\in L^1(0,T;H^2(\Om)\cap H_0^1(\Om))\cap W^{1,1}(0;T;L^2(\Om)).
\]
Then by $\mu\in L^2(0,T)$, we obtain
\[
\wt u\in L^2(0,T;H^2(\Om)\cap H_0^1(\Om))\cap H^1(0;T;L^2(\Om))
\]
and $\wt u=0$ in $\Om\times\{0\}$ in view of Young's convolution inequality. Thus $\wt u$ achieves the same regularity and the initial condition of the solution $u$ to \eqref{main:e0}.

On the other hand, it follows from direct calculation that
\[
\pa_t\wt u(t)=\pa_t\int_0^t\mu(s)v(t-s)\,\rd s=\mu(t)v(0)+\int_0^t\mu(s)\pa_t v(t-s)\,\rd s=(\mu*\pa_t v)(t)+\mu(t)\,g.
\]
Utilizing the expression \eqref{eq-RL} and the commutative law of convolutions, we calculate
\begin{align*}
\pa_t\wt u+q\,\pa_t^\al\wt u+\cA\wt u & =(\mu*\pa_t v+\mu\,g)+q\,k_{1-\al}*(\mu*\pa_t v+\mu\,g)+\cA(\mu*v)\\
& =\mu*(\pa_t v+q\,k_{1-\al}*\pa_t v+\cA v)+(\mu+q\,k_{1-\al}*\mu)g\\
& =\mu*(\pa_t v+q\,\pa_t^\al v+\cA v)+(\mu+q\,J_{0+}^{1-\al}\mu)g=\rho\,g,
\end{align*}
where we employed the governing equation of $v$ and \eqref{eq-mu}. Thus $\wt u$ also shares the same governing equation with $u$, which completes the proof.
\end{proof}

The second ingredient for Problem \ref{isp} turns out to be the following weak vanishing property.

\begin{lemma}\label{lem-WVP}
Let $v$ be the unique solution to the homogeneous problem \eqref{eq-IBVP-v} with $g\in L^2(\Om)$. Then for any nonempty subdomain $\om\subset\Om$ and any nonempty open interval $I\subset(0,T),$ $v=0$ in $\om\times I$ implies $v\equiv0$ in $\Om\times(0,T)$.
\end{lemma}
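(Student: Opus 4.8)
The plan is to combine three ingredients that are already available: the time-analyticity from Lemma \ref{lem-forward}(ii), the Laplace transform in time, and the weak unique continuation property for the elliptic operator $\cA$.

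First I would upgrade the local-in-time vanishing to a global one. Since the hypotheses here coincide with those of Lemma \ref{lem-forward}(ii) (the initial value is $g\in L^2(\Om)$ and the source vanishes), the map $t\mapsto v(\,\cdot\,,t)$ extends to a real-analytic function on $(0,\infty)$ with values in $H^2(\Om)\cap H_0^1(\Om)$. Composing with the bounded restriction operator $H^2(\Om)\to L^2(\om)$, the $L^2(\om)$-valued function $t\mapsto v(\,\cdot\,,t)|_\om$ is real-analytic on $(0,\infty)$ and vanishes on the open interval $I\subset(0,T)\subset(0,\infty)$; by the identity theorem for vector-valued analytic functions it vanishes on all of $(0,\infty)$, that is, $v=0$ in $\om\times(0,\infty)$.

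Next I would pass to the Laplace transform. Using the decay of $v$ as $t\to\infty$ (which follows from the Mittag-Leffler estimates underlying Lemma \ref{lem-forward}(ii)), the transform $\hat v(\,\cdot\,,z):=\int_0^\infty e^{-zt}v(\,\cdot\,,t)\,\rd t$ is well defined and analytic for $\mathrm{Re}\,z$ large. Taking the Laplace transform of the governing equation in \eqref{eq-IBVP-v}, with $\mathcal L[\pa_t v]=z\hat v-g$ and $\mathcal L[\pa_t^\al v]=z^\al\hat v-z^{\al-1}g$, leads to
\[
(\cA+z+q\,z^\al)\hat v=(1+q\,z^{\al-1})g,
\]
so that $\hat v(\,\cdot\,,z)=(1+q\,z^{\al-1})(\cA+\eta(z))^{-1}g$ with $\eta(z):=z+q\,z^\al$. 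Since $v|_\om\equiv0$ forces $\hat v|_\om\equiv0$, and the scalar factor $1+q\,z^{\al-1}$ does not vanish on the right half-plane, I obtain $\big((\cA+\eta(z))^{-1}g\big)|_\om=0$ there. Finally I would exploit the spectral structure: the resolvent $\eta\mapsto(\cA+\eta)^{-1}g$ is an $L^2(\Om)$-valued meromorphic function with simple poles at the points $\eta=-\la_k$, $\la_k$ being the distinct eigenvalues of $\cA$, and with residues equal to the spectral projections $-P_kg$. As $\eta(z)$ is analytic and nonconstant, its image over the right half-plane is open, so the vanishing of $((\cA+\eta)^{-1}g)|_\om$ on this open set propagates by analytic continuation to all admissible $\eta$; reading off the residue at each pole gives $(P_kg)|_\om=0$. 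Each $P_kg$ solves $\cA(P_kg)=\la_k\,P_kg$ in $\Om$ with homogeneous Dirichlet data and vanishes on the nonempty open set $\om$, whence the weak unique continuation property for second order elliptic operators (valid for the $C^1$ leading coefficients assumed here) yields $P_kg\equiv0$ in $\Om$. Summing over $k$ gives $g=0$, and therefore $v\equiv0$ in $\Om\times(0,T)$.

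The hard part will be the spatial step: one must legitimately extract the residues of the $\om$-restricted meromorphic resolvent and then invoke elliptic unique continuation, which is precisely where information genuinely propagates from $\om$ to all of $\Om$. By contrast, the time-analyticity and the Laplace-transform manipulations are essentially bookkeeping, modulo the decay estimate needed to justify convergence of the transform.
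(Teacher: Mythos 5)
Your argument is correct and follows exactly the route the paper indicates for this lemma: use the time-analyticity from Lemma \ref{lem-forward} (ii) to upgrade $v=0$ in $\om\times I$ to $v=0$ in $\om\times(0,\infty)$, then apply the Laplace transform and the resolvent/residue argument of \cite[Theorem 2.5, Case 2]{JLLY17}, finishing with elliptic unique continuation applied to the eigenprojections of $g$. The paper omits the details but explicitly describes its proof as resting on the time-analyticity and a Laplace transform technique, so your reconstruction coincides with the intended argument.
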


As before, a fractional counterpart of Lemma \ref{lem-WVP} was established in \cite[Theorem 2.5]{JLLY17} in the framework of \eqref{eq-multi}, whose proof relies on the time-analyticity of the solution and a Laplace transform technique. Owing to the time-analyticity by Lemma \ref{lem-forward}, the assumption $v=0$ in $\om\times I$ of Lemma \ref{lem-WVP} can be extended to $v=0$ in $\om\times(0,\infty)$, from which the remaining proof is almost identical to that for Case 2 of \cite[Theorem 2.5]{JLLY17}. Again we omit the details here.

Now we are well prepared to state the main uniqueness result for Problem \ref{isp}.

\begin{theorem}
Let $u_i$ $(i=1,2)$ be the solution to \eqref{main:e0} with
\[
g=g_i\in L^2(\Om),\quad\rho\in L^2(0,T),\quad\rho\not\equiv0\mbox{ in }(0,T),\quad a\in H_0^1(\Om).
\]
Then for any nonempty subdomain $\om\subset\Om,$ $u_1=u_2$ in $\om\times(0,T)$ implies $g_1=g_2$ in $\Om$.
\end{theorem}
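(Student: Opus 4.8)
The plan is to exploit the linearity of \eqref{main:e0} to reduce the two-solution statement to a single problem with homogeneous initial value, and then to chain together the fractional Duhamel's principle (Lemma \ref{lem-Duhamel}) and the weak vanishing property (Lemma \ref{lem-WVP}); the only genuinely new ingredient is a convolution (Titchmarsh-type) argument that converts the vanishing of the observed field into the vanishing of the auxiliary function $v$ on a common time interval.

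First I would set $u:=u_1-u_2$ and $g:=g_1-g_2$. Since $u_1$ and $u_2$ share the same initial value $a$ and the same temporal factor $\rho$, the difference $u$ solves \eqref{main:e0} with the homogeneous initial condition $a=0$ and the spatial load $g\in L^2(\Om)$, and the hypothesis becomes $u=0$ in $\om\times(0,T)$; the goal is thus reduced to showing $g=0$ in $\Om$. Because $a=0$, Lemma \ref{lem-Duhamel} applies and yields the representation $u=\mu*v$, where $v$ solves the homogeneous problem \eqref{eq-IBVP-v} with $v(\,\cdot\,,0)=g$ and $\mu\in L^2(0,T)$ is the solution of the integral equation \eqref{eq-mu}. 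I would next record that $\mu\not\equiv0$: were $\mu\equiv0$, then \eqref{eq-mu} would force $\rho=\mu+q\,J_{0+}^{1-\al}\mu\equiv0$, contradicting the assumption $\rho\not\equiv0$.

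The crux --- and the step I expect to be the main obstacle --- is to pass from $\mu*v=0$ in $\om\times(0,T)$ to the vanishing of $v$ on a nonempty time interval, uniformly in $\bm x\in\om$. Since $v\in L^1(0,T;H^2(\Om)\cap H_0^1(\Om))$, Fubini gives $v(\bm x,\,\cdot\,)\in L^1(0,T)$ for a.e.\ $\bm x$, so for a.e.\ fixed $\bm x\in\om$ the scalar identity $(\mu*v(\bm x,\,\cdot\,))(t)=0$ holds for a.e.\ $t\in(0,T)$. Writing $\ell:=\inf(\operatorname{supp}\mu)$ for the left endpoint of the support and applying Titchmarsh's convolution theorem, the relation $\mu*v(\bm x,\,\cdot\,)=0$ forces the left endpoints of the two factors to add up to at least $T$; since $\mu\not\equiv0$ gives $\ell<T$, this yields $v(\bm x,\,\cdot\,)=0$ a.e.\ on $(0,T-\ell)$. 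Crucially $\ell$ does not depend on $\bm x$, so the single interval $I:=(0,T-\ell)\subset(0,T)$ is nonempty and $v=0$ in $\om\times I$. The care needed here lies precisely in verifying the integrability hypotheses of Titchmarsh's theorem and in extracting this $\bm x$-independent interval.

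Finally I would invoke Lemma \ref{lem-WVP} with this $I$: the vanishing of $v$ on $\om\times I$ propagates to $v\equiv0$ in $\Om\times(0,T)$. By the $W^{1,1}(0,T;L^2(\Om))$ regularity from Lemma \ref{lem-forward} (ii), $v$ admits an absolutely continuous --- hence right-continuous at $t=0$ --- $L^2(\Om)$-valued representative, so the initial trace satisfies $g=v(\,\cdot\,,0)=\lim_{t\to0+}v(\,\cdot\,,t)=0$. Therefore $g_1=g_2$ in $\Om$, which completes the argument.
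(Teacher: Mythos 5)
Your proof is correct and follows essentially the same route as the paper's: reduce to the difference $u=u_1-u_2$ with $a=0$, invoke the fractional Duhamel representation $u=\mu*v$ from Lemma \ref{lem-Duhamel}, use Titchmarsh's convolution theorem to obtain $v=0$ on $\om\times I$ for a nonempty $\bm x$-independent interval $I$, and conclude via the weak vanishing property (Lemma \ref{lem-WVP}) that the initial value $g$ vanishes. The only (harmless) variants are that you apply Titchmarsh directly to $\mu*v(\bm x,\cdot)$ pointwise in $\bm x$ after observing that $\mu\not\equiv0$ follows from \eqref{eq-mu}, whereas the paper first rewrites $\mu*v$ as $\rho*v$ using \eqref{eq-mu} and pairs with test functions $\psi\in C_0^\infty(\om)$ so that the hypothesis $\rho\not\equiv0$ enters Titchmarsh's theorem verbatim.
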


\begin{proof}
Similarly to Lemma \ref{lem-Duhamel}, an almost identical conclusion was obtained in \cite[Theorem 2.6]{JLLY17} in the framework of \eqref{eq-multi} but with a stronger assumption on the known component $\rho(t)$ of the source term. Here we weaken the assumption to merely $\rho\not\equiv0$ in $(0,T)$ and renew the proof for the sake of completeness.

Putting $u:=u_1-u_2$ and $g:=g_1-g_2$, we easily see that $u$ satisfies \eqref{main:e0} with $a=0$ and $u=0$ in $\om\times(0,T)$. Then Lemma \ref{lem-Duhamel} asserts $u=\mu*v$ in $\Om\times(0,T)$ with $\mu$ and $v$ satisfying \eqref{eq-mu} and \eqref{eq-IBVP-v} respectively. Then we calculate
\[
u+q\,J_{0+}^{1-\al}u=\mu*v+q\,k_{1-\al}*(\mu*v)=(\mu+q\,k_{1-\al}*\mu)*v=\rho*v
\]
in $\Om\times(0,T)$, which, together with $u=0$ in $\om\times(0,T)$, indicates
\[
0=u+q\,J_{0+}^{1-\al}u=\rho*v\quad\mbox{in }\om\times(0,T).
\]
Picking any $\psi\in C_0^\infty(\om)$, we take inner products on both sides of the above equality with $\psi$ to derive
\[
0=(\rho*v,\psi)=\rho*v_\psi=0\mbox{ in }(0,T),\quad v_\psi(t):=(v(t),\psi).
\]
Then according to the Titchmarsh convolution theorem (see \cite[Theorem VII]{T26}), there exist constants $\tau_1,\tau_2\in[0,T]$ satisfying $\tau_1+\tau_2\ge T$ such that
\[
\rho\equiv0\mbox{ in }(0,\tau_1),\quad v_\psi\equiv0\mbox{ in }(0,\tau_2).
\]
Since it was assumed $\rho\not\equiv0$ in $(0,T)$, there should hold $\tau_1<T$ and thus $\tau_2>0$, which is independent of the choice of $\psi$. As $\psi\in C_0^\infty(\om)$ was chosen arbitrarily, we arrive at $v=0$ in $\om\times(0,\tau_2)$. Finally, we take advantage of Lemma \ref{lem-WVP} to conclude $v\equiv0$ in $\Om\times(0,\tau_2)$. Recalling that $g$ serves as the initial value of $v$, we immediately arrive at $g=0$ or equivalently $g_1=g_2$ in $\Om$.
\end{proof}


\section{Numerical scheme for solving the direct problem (\ref{main:e0})}\label{sect:num}

We introduce a numerical approximation for the two-time scale mobile-immobile subdiffusion equation described in \eqref{main:e0}. We establish a uniform division of the interval $[0,T]$ by
\[
t_n:=n\tau,\quad n=0,1,\dots,N,\quad\tau:=\f TN.
\]
Then, at time $t_n$, equation \eqref{main:e0} can be expressed as
\begin{equation}\label{PlateEqL}
\pa_t u(\bm x,t_n)+q\,\pa_t^\al u(\bm x,t_n)+\cA u(\bm x,t_n)=F(\bm x,t_n).
\end{equation}
We discretize the derivatives of $u$ with respect to time, $\pa_t u$ at $t=t_n$ by
\[
\pa_t u(\bm x,t_n)\approx\de_\tau u(\bm x,t_n):=\f{u(\bm x,t_n)-u(\bm x,t_{n-1})}\tau.
\]
We then approximate $\pa_t^\al u$ at $t=t_n$ using a classical L1 scheme (see \cite{LinXu,Sun,SunGao,SunWu}) as
\begin{align*}
\pa_t^\al u(\bm x,t_n) & \approx\de^\al_\tau u(\bm x,t_n):=\f1{\Ga(1-\al)}\sum_{k=1}^n\int_{t_{k-1}}^{t_k}\f{\de_\tau u(\bm x,t_k)}{(t_n-t)^\al}\,\rd t\\
& =\f1{\Ga(2-\al)}\sum_{k=1}^n\left[(t_n-t_{k-1})^{1-\al}-(t_n-t_k)^{1-\al}\right]\de_\tau u(\bm x,t_k)\\
& = \sum_{k=1}^n c_{n,k}\,\de_\tau u(\bm x,t_k)
\end{align*}
with
\[
c_{n,k}:=\f1{\Ga(2-\al)}\left[(t_n-t_{k-1})^{1-\al}-(t_n-t_k)^{1-\al}\right].
\]

Let $\chi=\chi(\bm x)$ be any test function possessing square integrable first-order derivatives and adhering to the boundary condition \eqref{main:e0}. We proceed by multiplying equation \eqref{PlateEqL} by $\chi$, integrating the equation over $\Om$, performing integration by parts, and utilizing the aforementioned approximations to derive the weak formulation
\begin{align*}
& \int_\Om\left\{\left(\de_\tau u(\bm x,t_n)+q\,\de_\tau^\al u(\bm x,t_n+c(\bm x)\right)\chi(\bm x)+\bm A(\bm x)\nb u(\bm x,t_n)\cdot\nb\chi(\bm x)\right\}\rd\bm x\\
& =\int_\Om F(\bm x,t_n)\chi(\bm x)\,\rd\bm x.
\end{align*}

Let $S_H(\Om)$ denote the finite element space comprised of continuously differentiable and piecewise  linear polynomials defined $\Om$. The procedure of the  finite element method is as follows: For $n=1,2,\ldots,N$, determine $u_{h,n}(\bm x)\in S_H(\Om)$ such that for $\chi_h\in S_H(\Om)$,
\begin{align*}
& \int_\Om\left\{\left(\de_\tau u_{h,n}(\bm x)+q\,\de_\tau^\al u_{h,n}(\bm x)+c(\bm x)\right)\chi_h+\bm A(\bm x)\nb u_{h,n}(\bm x)\cdot\nb\chi_h\right\}\rd\bm x\\
& =\int_\Om F(\bm x,t_n)\chi_h\,\rd\bm x
\end{align*}


\section{Inverse problem and its conjugate gradient algorithm}\label{sect:inverse}


\subsection{Mathematical formulation of the inverse problem and the corresponding inverse algorithm}

Within this section, we transform the inverse source problem into a minimization problem using the framework of optimal control. Now, we present the optimal control formulation for the regularized inverse source problem. We define the regularized form of the Tikhonov functional by extending it as follows:
\begin{align}
\cJ(g) & :=\f1{2}\int_0^T\!\!\!\int_\om\left(u(\bm x,t;g)-u_d(\bm x,t)\right)^2\,\rd\bm x\rd t+\f{\beta}{2}\int_\Om g^2(\bm x)\,\rd\bm x,\label{ocp:e0}\\
\mathcal U_{\mathrm{ad}} & =\{g\in L^2(\Om):|g|\le g_{\max}\mbox{ a.e.\! in }\Om\},\nonumber
\end{align}
where $u_d(\bm x, t)$ represents the data or measured solution in the interior observation subdomain $\omega$ over the time interval $(0, T)$, $u(\bm x,t;g)$ is the solution of the problem defined in \eqref{main:e0}, and $\beta>0$ is the parameter of  regularization. Subsequently, the inverse problem is reformulated as an optimal control problem: determine $g\in\mathcal U_{\mathrm{ad}}$ that minimizes the functional $\cJ(g)$ under the constraint that $u\left(\bm x,t;g\right)$ satisfies the model problem \eqref{main:e0}.

Now, we prove the uniqueness of the inverse source problem for the initial-boundary value mobile-iimmobile diffusion equation

\begin{theorem}\label{thm2}
The derivative of the objective function $\cJ$ defined in \eqref{ocp:e0} with respect to the parameter $g$ can be expressed as follows:
\begin{equation}\label{thmIP:e0}
\cJ'(g)=\int_0^T\rho(t)v(\bm x,t)\,\rd t+\beta g(\bm x),
\end{equation}
where $v(\bm x,t)$ solves the adjoint problem
\begin{equation}\label{adj:e1}
\begin{cases}
-\pa_t v(\bm x,t)+q\,\hat\pa_t^\al v(\bm x,t)+\cA v(\bm x,t)=\mathds{1}_\om(u(\bm x,t;g)-u_d(\bm x,t)), & (\bm x,t)\in\Om \times[0,T),\\
v(\bm x,T)=0, & \bm x\in\Om,\\
v(\bm x,t)=0, & (\bm x,t)\in\pa\Om\times [0,T).
\end{cases}
\end{equation}
Here $\mathds{1}_\om$ denotes the characteristic function of $\om$ and $\hat\pa_t^\al$ denotes the backward Riemann-Liouville differential operator defined by (see \cite{Pod})
\begin{equation}\label{RiemannD}
\hat\pa_t^\al z:=-\pa_t\,({}_t\hat I_T^{1-\al}z),\quad{}_t\hat I_T^{1-\al}:=\f1{\Ga(1-\al)}\int_t^T\f{z(s)}{(s-t)^\al}\,\rd s.
\end{equation}
\end{theorem}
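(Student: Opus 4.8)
The plan is to compute the Gateaux derivative of $\cJ$ by introducing the linearized (sensitivity) equation and then identifying the adjoint variable $v$ so that the resulting expression collapses into the form \eqref{thmIP:e0}. First I would fix a direction $h\in L^2(\Om)$ and consider the map $g\mapsto u(\,\cdot\,;g)$. By linearity of \eqref{main:e0} in the source, the directional derivative $w:=\frac{\rd}{\rd\ep}u(\,\cdot\,;g+\ep h)\big|_{\ep=0}$ solves the same initial-boundary value problem with right-hand side $\rho(t)h(\bm x)$ and homogeneous initial and boundary data; that is, $\pa_t w+q\,\pa_t^\al w+\cA w=\rho\,h$ in $\Om\times(0,T)$, $w(\,\cdot\,,0)=0$, $w=0$ on $\pa\Om\times(0,T)$. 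Differentiating \eqref{ocp:e0} under the integral sign then gives
\begin{equation}\label{prop:gateaux}
\langle\cJ'(g),h\rangle=\int_0^T\!\!\!\int_\om\left(u(\bm x,t;g)-u_d(\bm x,t)\right)w(\bm x,t)\,\rd\bm x\rd t+\beta\int_\Om g\,h\,\rd\bm x.
\end{equation}

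Next I would eliminate $w$ from the first term by pairing it against the adjoint solution $v$. Writing the observation mismatch as $\mathds{1}_\om(u-u_d)$, the first integral in \eqref{prop:gateaux} equals $\int_0^T\!\int_\Om\mathds{1}_\om(u-u_d)\,w\,\rd\bm x\rd t$, which by \eqref{adj:e1} equals $\int_0^T\!\int_\Om\left(-\pa_t v+q\,\hat\pa_t^\al v+\cA v\right)w\,\rd\bm x\rd t$. The heart of the argument is then to move each operator off $w$ and onto $v$ via integration by parts in both space and time. For the elliptic term, self-adjointness of $\cA$ together with the homogeneous Dirichlet conditions on both $w$ and $v$ yields $\int_\Om(\cA v)w=\int_\Om v\,(\cA w)$. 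For the first-order time derivative, integration by parts in $t$ produces $-\int_0^T(\pa_t v)w\,\rd t=\int_0^T v\,(\pa_t w)\,\rd t-[v\,w]_0^T$, and the boundary terms vanish because $w(\,\cdot\,,0)=0$ and $v(\,\cdot\,,T)=0$. The crucial and most delicate step is the fractional term: I expect the backward operator $\hat\pa_t^\al$ in \eqref{RiemannD} to be precisely the formal adjoint of the forward Caputo operator $\pa_t^\al$ with respect to the $L^2(0,T)$ pairing, so that $\int_0^T q\,(\hat\pa_t^\al v)\,w\,\rd t=\int_0^T q\,v\,(\pa_t^\al w)\,\rd t$, again modulo boundary contributions at $t=0,T$ that are killed by the initial/terminal conditions.

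Assembling these three identities, the spatial-temporal pairing becomes $\int_0^T\!\int_\Om v\left(\pa_t w+q\,\pa_t^\al w+\cA w\right)\rd\bm x\rd t$, and substituting the sensitivity equation $\pa_t w+q\,\pa_t^\al w+\cA w=\rho\,h$ collapses this to $\int_0^T\!\int_\Om v\,\rho(t)h(\bm x)\,\rd\bm x\rd t=\int_\Om\left(\int_0^T\rho(t)v(\bm x,t)\,\rd t\right)h(\bm x)\,\rd\bm x$ by Fubini. Combining with the regularization term from \eqref{prop:gateaux} gives $\langle\cJ'(g),h\rangle=\int_\Om\left(\int_0^T\rho\,v\,\rd t+\beta g\right)h\,\rd\bm x$ for every $h$, and since $h$ is arbitrary the Riesz representative is exactly \eqref{thmIP:e0}.

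The main obstacle I anticipate is the rigorous justification of the fractional integration-by-parts formula, namely that $\hat\pa_t^\al$ as defined in \eqref{RiemannD} is the genuine $L^2$-adjoint of the Caputo derivative $\pa_t^\al=J_{0+}^{1-\al}\circ\frac{\rd}{\rd t}$. Concretely I would verify $\int_0^T(J_{0+}^{1-\al}\phi)\psi\,\rd t=\int_0^T\phi\,({}_t\hat I_T^{1-\al}\psi)\,\rd t$ by a Fubini exchange on the convolution kernel $k_{1-\al}$, then transfer the $\frac{\rd}{\rd t}$ onto $\psi$ through a further integration by parts, carefully tracking the boundary terms at $t=0$ and $t=T$ and confirming they vanish under $w(\,\cdot\,,0)=0$ and $v(\,\cdot\,,T)=0$. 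The regularity available from Lemma \ref{lem-forward}(i) — namely $w\in H^1(0,T;L^2(\Om))\cap L^2(0,T;H^2\cap H_0^1)$ — should be exactly what is needed to make each of these manipulations, and the interchange of differentiation and integration in passing to \eqref{prop:gateaux}, fully legitimate.
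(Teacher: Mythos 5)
Your proposal is correct and follows essentially the same route as the paper: linearize via the sensitivity equation with source $\rho\,h$, pair against the adjoint solution $v$, transfer $\pa_t$, $\pa_t^\al$ and $\cA$ between the two solutions using the initial/terminal and Dirichlet conditions (with the fractional transfer done by a Fubini exchange on the kernel $k_{1-\al}$, exactly as in the paper's chain of equalities, where the only surviving boundary term $\int_\om\de u(\bm x,0)\,{}_0\hat I_T^{1-\al}v\,\rd\bm x$ is killed by $\de u(\cdot,0)=0$), and conclude by Fubini and arbitrariness of the direction. The only cosmetic difference is that you present the duality as a Gateaux derivative in a fixed direction $h$ while the paper writes the first-order expansion of $\cJ(g+\de g)-\cJ(g)$; the computations are identical.
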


\begin{proof}
In order to calculate the gradient of the  cost function with respect to $g$, we compute
\begin{align}
\de\cJ(g) & =\cJ(g+\de g)-\cJ(g)\nonumber\\
&= \int_0^T\!\!\!\int_\om(u(\bm x,t;g)-u_d(\bm x,t))\,\de u\,\rd\bm x\rd t+\beta\int_\Om g\,\de g\,\rd\bm x + o(\de g)\label{thmIP:e1}
\end{align}
as $\|\de g\|_{L^2(\Om)}\to0$, where $\de u:=u(\bm x,t;g+\de g)-u(\bm x,t;g)$ solves the following sensitivity system
\begin{equation}\label{thmIP:e2}
\begin{cases}
\pa_{t}\de u(\bm x,t)+q\,\pa_t^\al\de u(\bm x,t)+\cA\de u(\bm x,t)=\rho(t)\de g(\bm x), & (\bm x,t)\in\Om\times(0,T],\\
\de u(\bm x,0)=0, & \bm x\in\Om,\\
\de u(\bm x,t)=0, & (\bm x,t)\in\pa\Om\times(0,T].
\end{cases}
\end{equation}
Then we multiply $v(\bm x,t)$ on both sides of the first equation in \eqref{thmIP:e2} and integrate over $\Om\times(0,T)$ to obtain
\begin{equation}\label{thmIP:e3}
\int_0^T\!\!\!\int_\Om\rho(t)\de g(\bm x)v(\bm x,t)\,\rd\bm x\rd t=\int_0^T\!\!\!\int_\Om(\pa_t\de u+q\,\pa_t^\al \de u+\cA\de u)\,v\,\rd\bm x\rd t.\\
\end{equation}
For the first term on the right-handed side of \eqref{thmIP:e3}, we employ the initial condition of $\de u$ in \eqref{thmIP:e2} and the terminal condition of $v$ in \eqref{adj:e1} to derive
\begin{align}
\int_0^T\!\!\!\int_\Om v\,\pa_t\de u\,\rd\bm x\rd t & =\int_0^T\!\!\!\int_\Om(-\pa_t v)\,\de u\,\rd\bm x\rd t+\int_\Om\big[v\,\de u\big]_{t=0}^{t=T}\,\rd\bm x\nonumber\\
& =\int_0^T\!\!\!\int_\Om(-\pa_t v)\,\de u\,\rd\bm x\rd t.\label{thmIP:e4}
\end{align}
Next, we use the forward Caputo fractional operator $\pa_t^\al$ in \eqref{Caputo} and the backward differential operator $\hat\pa_t^\al$ defined in \eqref{RiemannD} to integrate $v\,\pa_t^\al\de u$ in the second term on the right-handed side of \eqref{thmIP:e3} to find
\begin{align}
\int_\om\int_0^T v\,\pa_t^\al\de u\,\rd t\rd\bm x & = \f1{\Ga(1-\al)}\int_\om\int_0^T v(\bm x,t)\int_0^t\f{\pa_s\de u(\bm x,s)}{(t-s)^\al}\,\rd s\rd t\rd\bm x\nonumber\\
& =\f1{\Ga(1-\al)}\int_\om\int_0^T\!\!\!\int_0^t\f{v(\bm x,t)\pa_s\de u(\bm x,s)}{(t-s)^\al}\,\rd s\rd t\rd\bm x\nonumber\\
& =\f1{\Ga(1-\al)}\int_\om\int_0^T\!\!\!\int_s^T\f{v(\bm x,t)\pa_s\de u(\bm x,s)}{(t-s)^\al}\,\rd t\rd s\rd\bm x\nonumber\\
& =\int_\om\int_0^T\pa_s\de u(\bm x,s)\f1{\Ga(1-\al)}\int_s^T\f{v(\bm x,t)}{(t-s)^\al}\,\rd t\rd s\rd\bm x\nonumber\\
& =\int_\om\int_0^T\de u(\bm x,s)\hat\pa_s^\al v(\bm x,s)\,\rd s\rd\bm x-\int_\om\de u(\bm x,0){}_0\hat I_T^{1-\al}v(\bm x,s)\,\rd\bm x\nonumber\\
& =\int_0^T\!\!\!\int_\om\de u(\bm x,t)\hat\pa_t^\al v(\bm x,t)\,\rd\bm x\rd t.
\end{align}
For the third term on the right-hand side of \eqref{thmIP:e3}, we apply the Green formula and take into account the boundary conditions \eqref{adj:e1} for $v$ and \eqref{thmIP:e2} for $\de u$ to obtain an equivalent expression
\begin{equation}\label{thmIP:e6}
\int_0^T\!\!\!\int_\Om(\cA\de u)v\,\rd\bm x\rd t=\int_0^T\!\!\!\int_\Om\cA v\,\de u\,\rd\bm x\rd t.
\end{equation}
Since $v(\bm x,t)$ is the solution of the adjoint problem \eqref{adj:e1}, then we substitute equations \eqref{thmIP:e4}--\eqref{thmIP:e6} into \eqref{thmIP:e3} to arrive at
\begin{align*}
\int_0^T\!\!\!\int_\Om\rho(t)\de g(\bm x)v(\bm x,t)\,\rd\bm x\rd t & =\int_0^T\!\!\!\int_\Om\left(\pa_t\de u+q\,\pa_t^\al\de u+\cA\de u\right)v\,\rd\bm x\rd t\\
& = \int_0^T\!\!\!\int_\Om\left(-\pa_t v+q\,\hat\pa_t^\al v+\cA v\right)\de u\,\rd\bm x\rd t\\
& = \int_0^T\!\!\!\int_\Om\mathds{1}_\om(u(\bm x,t;g)-u_d))\de u\,\rd\bm x\rd t\\
& = \int_0^T\!\!\!\int_\om(u(\bm x,t;g)-u_d))\de u\,\rd\bm x\rd t.
\end{align*}
Finally, we substitute the above equality into \eqref{thmIP:e1} to arrive at
\[
\de J(g)=\int_0^T\!\!\!\int_\Om\rho(t)\de g(\bm x)v(\bm x,t)\,\rd\bm x\rd t+\beta\int_\Om g(\bm x)\de g(\bm x)\,\rd\bm x + o(\de g)
\]
as $\|\de g\|_{L^2(\Om)}\to0$. Consequently, we can infer that
\[
J'(g)=\int_0^T\rho(t)v(\bm x,t)\,\rd t+\beta g(\bm x),
\]
which completes the proof.
\end{proof}

The above theorem provides valuable information on how changes in the parameter $g$ affect the overall cost function, guiding us towards an optimal solution.


\subsection{The reconstruction algorithm: finite element conjugate gradient algorithm}

In this subsection, we are now in a position to illustrate the conjugate gradient method to recover the spatial load $g(\bm x)$ from partial interior observation.

Upon examination of the demonstration provided in Theorem \ref{thm2}, it becomes apparent that the computation of the gradient of the cost function $\cJ$ with respect to $g$ relies on the resolution of coupled forward-backward systems. These systems involve not only addressing the direct problem \eqref{main:e0} but also tackling the adjoint problem \eqref{adj:e1}. Here, we need to emphasize: due to the structural similarity between the adjoint problem and the forward problem, the finite element method and error analysis aforementioned are equally applicable to solving the adjoint problem \eqref{adj:e1}.

Now, we introduce and elaborate on the implementation of the conjugate gradient algorithm (Algorithm \ref{alg_inv}) for the inversion of the source term.  Through this algorithmic approach, we aim to effectively reconstruct the source term by iteratively approaching the solution until convergence is achieved. This method offers a systematic and reliable framework for optimizing the inversion process and ultimately enhancing the overall performance of our computational model. Under our problem's formulation, $J(k)$ is convex. For convex optimization problems, any stationary point (where $J'(k)=0$) is indeed a global minimum. Therefore, the stopping criterion $\|\mathcal{J}'(g^k)\|_{L^2(\Omega)}\le\mathrm{Tolerance}$ is justified for locating the global optimum.

\begin{algorithm}[!h]
\caption{Finite Element Conjugate Gradient Algorithm}
\begin{algorithmic}
\STATE Initialize $k = 0$, and choose the initial guess $q^k$ with given initial data, boundary conditions, and the partial observation measurement data $u_d(\bm x,t)\in\om\times[0,T]$.
\FOR{$k=1,\dots,K$}
\STATE Solve the forward problem corresponding to the given data $g^k$ to obtain $u(\bm x,t;g^k)$.
\STATE Solve the adjoint problem \eqref{adj:e1} using the partial interior observation data.
\STATE Compute the gradient $\cJ'(g^k)$ using formula \eqref{thmIP:e0}.
\STATE Calculate the descent direction $d^k = -\cJ'(g^k)$.
\STATE Determine the search step $\la^k$ for the minimization problem using the Armijo rule:
$$\cJ(g^k+\la^k d^k)=\min_{\la>0}\cJ(g^k+\la d^k).$$
\STATE Update $g^{k+1}=g^k-\la^k\cJ'(g^k)$.
\STATE If $\|\cJ'(g^k)\|_{L^2(\Om)}\le\mathrm{Tolerance}$, then stop. Otherwise, continue looping.
\ENDFOR
\end{algorithmic}
\label{alg_inv}
\end{algorithm}

\begin{remark}
The selection of an appropriate regularization parameter $\beta$ is a critical step in solving ill-posed inverse problems, as it governs the trade-off between solution fidelity and numerical stability. This challenge has been extensively studied, yielding several principled methodologies. Several approaches include the L-curve criterion \cite{Hansen1992}, which identifies $\beta$ at the corner of the solution-residual norm plot; the discrepancy principle \cite{Morozov1984}, which sets $\beta$ such that the residual norm matches the estimated noise level; and the Generalized Cross-Validation (GCV) method \cite{Golub1979}, which determines $\beta$ by minimizing the predicted squared error. These methods, comprehensively discussed in \cite{Vogel2002}, provide formal frameworks for parameter selection.

In this work, we employ a pragmatic heuristic inspired by these established principles. Our approach selects $\beta$ by balancing the orders of magnitude between the estimated observation error and the regularization term. This strategy ensures a practical compromise between accuracy and stability: when observational uncertainty is significant, a larger $\beta$ value provides stronger regularization to suppress noise effects and prevent overfitting; conversely, with high-quality data, a smaller $\beta$ permits closer adherence to observations. While less automated than formal criteria, this scale-balancing method offers a computationally efficient and intuitively clear alternative that effectively captures the essential compromise embodied in more sophisticated techniques, making it particularly suitable for the class of problems investigated in this study.
\end{remark}


\section{Numerical experiments}\label{sect:ex}

In this section, we will conduct several numerical experiments to validate the accuracy of our finite element algorithm in solving inverse problems and to assess the efficiency of our algorithm in addressing inverse problems. As for inverse problems, we will evaluate the algorithm's robustness against noise interference and parameter errors, and investigate the accuracy and reliability of parameter estimation. These experiments will provide valuable insights into the performance and applicability of the algorithm, guiding us in further optimizing the algorithm for practical engineering applications. All the computations were implemented using MATLAB R2021b on a ThinkPad X1 Carbon (2021) Laptop with an Intel Core i7 (2.80 GHz) CPU and 16.0 GB RAM.


\subsection{Numerical experiments of the finite element conjugate gradient method}\label{subsect:ex2}

In the following subsections, we consider the contaminated data which the observation were noised as following
\begin{equation}\label{noise_free}
\tilde u_d(\bm x,t)=u_d(\bm x,t)+\ep\%\cdot\mathrm{rand}(-1,1),\quad(\bm x,t)\in\om\times[0,T].
\end{equation}
Here, $\ep$ represents the noise level, and rand denotes the random number uniformly distributed in $[-1,1]$. We calculate the relative $L^2$ error by
\[
\mathrm{Error}:=\f{\|g_{\rm true}-g_{\rm reconstruction}\|_{L^2(\Om)}}{\|g_{\rm true}\|_{L^2(\Om)}}.
\]

\begin{example}
Without loss of generality, we assume that the domain $\Om=(0,1)^2$ and the parameter $q=1$. The illustration of the domain is presented in Figure \ref{figure:gvisco}, where $\om$ is the observation domain. Further, we fix
\[
\rho(t)=2+ (2\pi t)^2,\quad g_{\rm true}(x,y)=\f1{2}\cos(\pi x)\cos(\pi y)+1.
\]

\begin{figure}[htbp]\centering
\includegraphics[scale=.4]{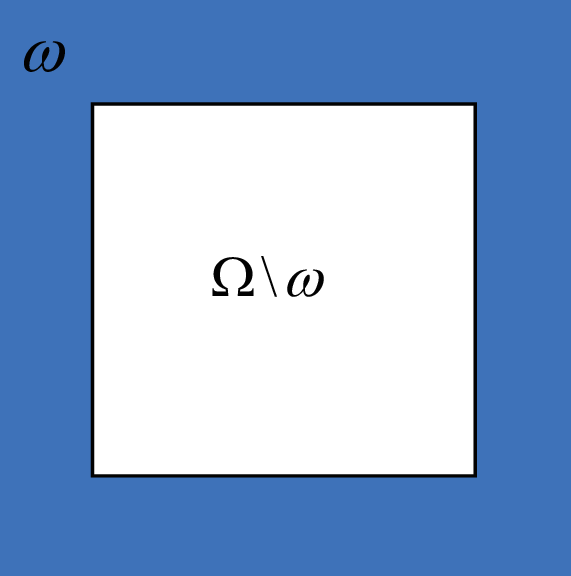}\hspace{0.1in}
\caption{Illustration of the domain.}\label{figure:gvisco}
\end{figure}

\begin{figure}[htbp]\centering
\includegraphics[width=6.5in,height=5in]{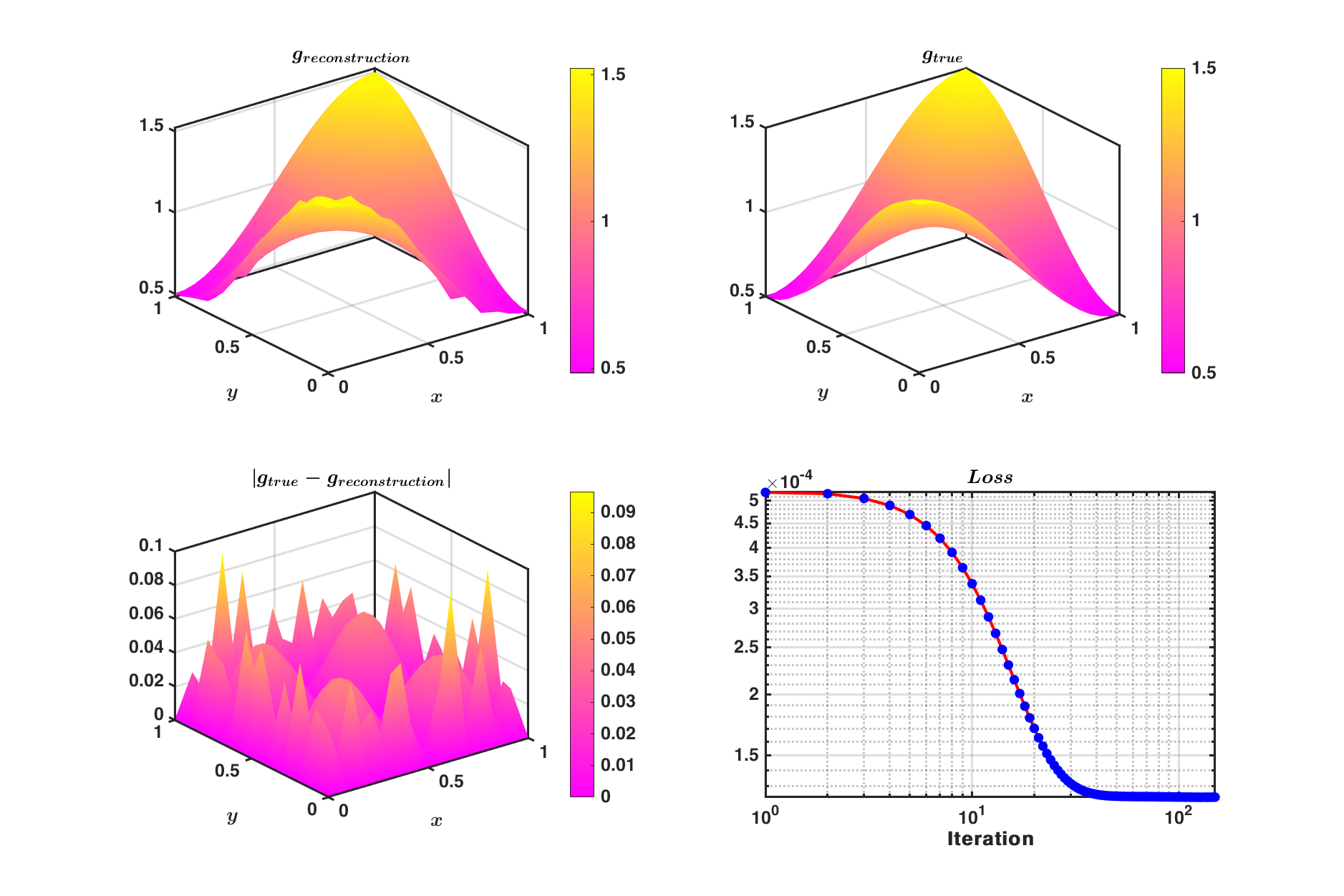}\hspace{0.15in}
\caption{We fix the order $\al=0.5$ and present the numerical reconstruction for the inverse source of the membrane. The observable subdomain $\om=\Om\setminus[0.05,0.95]^2$, $T=1.5$ and the noise level $\ep=1$ in \eqref{noise_free}. The first row from left to right are the reconstructed and the true solutions, respectively. The second row represents the absolute error and the corresponding loss with respect to the iterations.}\label{figure:3}
\end{figure}

We discretize the space-time region using a $20^2\times20$ mesh for computation. The numerical reconstructions are illustrated in Figure \ref{figure:3}. Despite the presence of noise in the observations, the loss function decreases to $10^{-4}$ within a few iterations, demonstrating the effectiveness of the algorithm. Additionally, we evaluate the algorithm by varying the subdomains and noise levels. For instance, we examine the impact of setting the noise disturbance to 1\%, 3\%, or 5\% while keeping the observation range fixed, and also assess the algorithm by altering the observation range while maintaining a fixed noise level. The results of these tests are summarized in Table \ref{table:1}.

\begin{table}[htbp]\centering
\setlength{\tabcolsep}{9mm}
\caption{We fix $\al=0.5$, and  present numerical reconstructions under the different choices of the noise levels $\ep$ and observation subdomains $\om$.}\vspace{0.5em}
{\begin{tabular}{cccc}
\hline$\ep\ (\%)$ & $\om$ & Error & Loss\\
\hline
1 & $\Om\setminus[0.1,0.9]^2$ & $2.43\times10^{-2}$ & $1.26\times10^{-4}$\\
3 & $\Om\setminus[0.1,0.9]^2$ & $6.34\times10^{-2}$ & $1.90\times10^{-4}$\\
5 & $\Om\setminus[0.1,0.9]^2$ & $9.02\times10^{-2}$ & $3.18\times10^{-4}$\\
\hline
1 & $\Om\setminus[0.2,0.8]^2$ & $2.68\times10^{-2}$ & $1.29\times10^{-4}$\\
1 & $\Om\setminus[0.1,0.9]^2$ & $2.43\times10^{-2}$ & $1.26\times10^{-4}$\\
1 & $\Om\setminus[0.05,0.95]^2$ & $2.91\times10^{-2}$ & $1.23\times10^{-4}$\\
\hline
\end{tabular}}\label{table:1}
\end{table}

In order to test the influence of the fractional orders, we also present different choices of the different fractional orders $\al=0.3,\ 0.6,\ 0.9$. The numerical results are listed in Table \ref{table:2}. As expected, the finite element conjugate gradient method demonstrates the robustness against oscillating noises in the observation data.

\begin{table}[htbp]\centering
\setlength{\tabcolsep}{9mm}
\caption{We fix noise level $\ep=2$ and observation subdomains $\om=\Om\setminus[0.05,0.95]^2$, and present numerical reconstructions under the different choices of the fractional order $\al$.}\vspace{0.5em}
{\begin{tabular}{cccc}
\hline
$\al$ & $\om$ & Error & Loss\\
\hline
$0.3$ & $\Om\setminus[0.05,0.95]^2$ & $4.60\times10^{-2}$ & $1.41\times10^{-4}$\\
$0.6$ & $\Om\setminus[0.05,0.95]^2$ & $4.15\times10^{-2}$ & $1.40\times10^{-4}$\\
$0.9$ & $\Om\setminus[0.05,0.95]^2$ & $4.63\times10^{-2}$ & $1.40\times10^{-4}$\\
\hline
\end{tabular}}\label{table:2}
\end{table}
\end{example}

\begin{example}
In this test, we investigate the influence of the monotonicity of $g_{\rm true}(x,y)$  upon the numerical inverse problem. We select three different true solutions of $g_{\rm true}(x,y)$ as follows,
\begin{align*}
g_{\rm true}(x,y) & =3-\exp\left(1-\f{x+y}{2}\right),\\
g_{\rm true}(x,y) & =\f1{2}\cos(\pi x)\cos(2\pi y)+1,\\
g_{\rm true}(x,y) & =\f1{2}\sin(\pi x)\cos(\pi y)+1.\\
\end{align*}
Here, we take the observation domain $\om=\Om\setminus[0.05,0.95]^2$, $\al=0.5$, and the noise level $\ep=1$. Other data is the same as in \textbf{Test 1}. We show the numerical reconstructions and true solutions in Figure \ref{figure:4}. We also list the relative errors and the corresponding loss in Table \ref{table:3}.
\begin{figure}[htbp]\centering
\includegraphics[width=5.5in,height=2.5in]{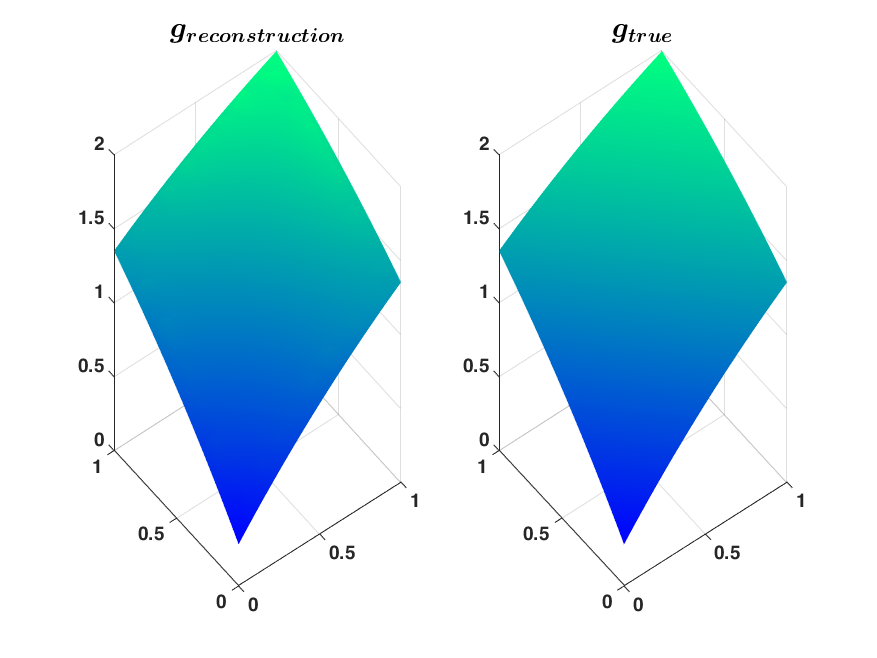}\hspace{0.15in}
\includegraphics[width=5.5in,height=2.5in]{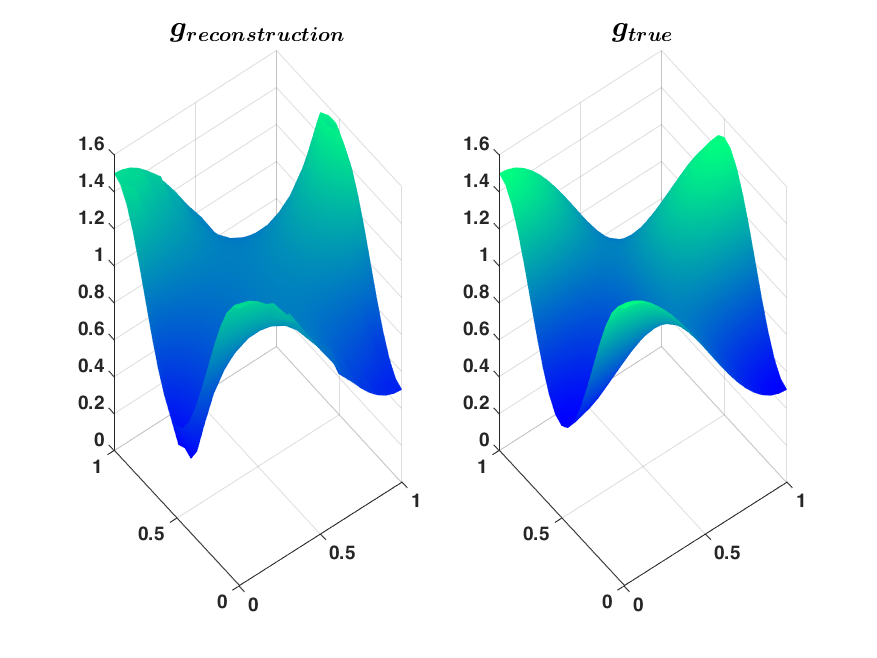}\hspace{0.15in}
\includegraphics[width=5.5in,height=2.5in]{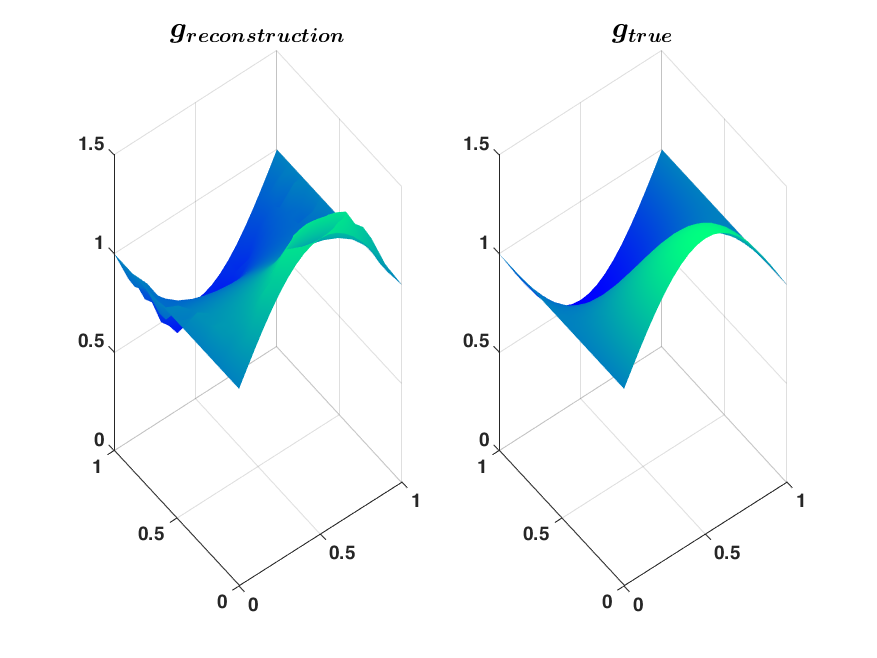}\hspace{0.15in}
\caption{We fix the order $\al=0.5$ and present the numerical reconstruction for the inverse source of the membrane. The observable subdomain $\om=\Om\setminus[0.05,0.95]^2$, $T=1.5$ and the noise level $\ep=1$ in \eqref{noise_free}. The first row to the third row from left to right are the reconstructed and true solutions, respectively.}\label{figure:4}
\end{figure}

\begin{table}[htbp]\centering
\setlength{\tabcolsep}{9mm}
\caption{We fix noise level $\ep=1$ and observation subdomains $\om=\Om\setminus[0.05,0.95]^2$, and present numerical reconstructions under the different choices of the function $g_{\rm true}(x,y)$.}\vspace{0.5em}
{\begin{tabular}{cccc}
\hline
$g_{\rm true}(x,y)$ & Error & Loss\\
\hline
$3-\exp(1-\f{x+y}2)$ & $2.75\times10^{-2}$ & $2.09\times10^{-4}$\\
$\f{\cos(\pi x)\cos(2\pi y)}2+1$ & $6.61\times10^{-2}$ & $1.23\times10^{-4}$\\
$\f{\sin(\pi x)\cos(\pi y)}2+1$ & $4.78\times10^{-2}$ & $1.22\times10^{-4}$\\
\hline
\end{tabular}}\label{table:3}
\end{table}
\end{example}


\section{Concluding remarks}\label{sect:end}

In this paper, we consider the inverse source problem for a mobile-immobile diffusion model from the partial interior observation via the optimal control method. In particular, we reformulate the inverse source problem as a minimization problem, then we derive its optimal control condition and develop a finite element conjugate gradient algorithm. We finally give several numerical experiments to show the utility and efficiency of the inverse algorithm. Furthermore, the current framework shows potential for extension to more complex scenarios, such as variable-order fractional mobile-immobile diffusion models \cite{ZhengWang2021,ZhengWang2021b} or variable-exponent subdiffusion problems based on model transformation techniques \cite{ZhengLiQiu2024,Zheng2025}.
Meanwhile, another possible extension in the formulation could be the generalization of the coefficient $q$ of the fractional derivative $\pa_t^\al$ to $q(\bm x)$ or even $q(\bm x,t)$. In the case of $q(\bm x)$, the fractional Duhamel's principle and the weak vanishing property still hold and thus the argument in this article is expected to work, while the case of $q(\bm x,t)$ should definitely be more challenging. For both cases, the numerical framework in this article could potentially be inherited. Similarly, identifying a general source term $F(\bm x,t)$ without the structure of separated variables also awaits further consideration.
These extensions would allow for more flexible characterization of heterogeneous media with spatially or temporally varying memory effects as well as varieties in the source term, representing promising directions for future investigation.


\section*{Acknowledgements}

The first author is supported by the National Natural Science Foundation of China (No. 12401555). The second author is supported by JSPS KAKENHI Grant Numbers JP22K13954, JP23KK0049, Guangdong Basic and Applied Basic Research Foundation (No. 2025A1515012248) and FY2025 MUSUBIME of Kyoto Universit.


\end{document}